\tikzstyle{node}=[thick,circle,draw=myblue,minimum size=22,inner sep=0.5,outer sep=0.6, fill=myblue!15]
\colorlet{myred}{red!80!black}
\colorlet{myblue}{blue!80!black}
\colorlet{mygreen}{green!60!black}
\colorlet{myorange}{orange!100}
\def\N
\def\R{\mathbb{R}}
\def\Rn{\mathbb{R}^n}
\DeclareMathOperator{\dist}{\operatorname{dist_\mathcal{G}}} 
\DeclareMathOperator{\diag}{\operatorname{diag}} 
\newtheorem{theorem}{Theorem}
\newtheorem{example}[theorem]{Example}
\newtheorem{lemma}[theorem]{Lemma}
\newtheorem{proposition}[theorem]{Proposition}
\newtheorem{definition}[theorem]{Definition}
\newtheorem{remark}[theorem]{Remark}
\newtheorem{assumption}{Assumption}
\title{\LARGE \bf
Separable approximations of optimal value functions under a decaying sensitivity assumption
}
\author{Mario Sperl, Luca Saluzzi, Lars Grüne, and Dante Kalise 
\thanks{This research has been supported by the Deutsche Forschungsgemeinschaft (DFG, German Research Foundation) under project number 463912816 within the priority program 441826958.}
\thanks{M. Sperl and L. Grüne are with University of Bayreuth, Department of Mathematics, Germany {\tt\small mario.sperl@uni-bayreuth.de, lars.gruene@uni-bayreuth.de}}%
\thanks{L. Saluzzi and D. Kalise are with Imperial College London, Department of Mathematics, United Kingdom {\tt\small l.saluzzi@imperial.ac.uk, d.kalise-balza@imperial.ac.uk}}%
}
\begin{document}

\maketitle
\thispagestyle{empty}
\pagestyle{empty}

\begin{abstract}
An efficient approach for the construction of separable approximations of optimal value functions from interconnected optimal control problems is presented. The approach is based on assuming decaying sensitivities between subsystems, enabling a curse-of-dimensionality free approximation, for instance by deep neural networks.
\end{abstract}

\section{INTRODUCTION}
Optimal control problems arise in a variety of applications including robotics, aerospace, and power systems, among others. One way to approach such problems is by means of dynamic programming, i.e., by computing the optimal value function by solving a Hamilton-Jacobi-Bellman (HJB)-PDE. However, the computational complexity of grid-based numerical methods for this problem grows exponentially with respect to the state space dimension, a phenomenon known as the curse of dimensionality. Over the last years, the use of data-driven methods has emerged as a suitable alternative to circumvent this difficulty \cite{albi2022, dolgov2022data}. In \cite{nakamurazimmerer2021} the authors propose a deep learning method to approximate solutions of high-dimensional HJB-PDEs. The fact that neural networks are capable of overcoming the curse of dimensionality for compositional functions, see \cite{poggio2017} and \cite{kang2022}, has also been exploited in \cite{gruene2021_ComputingLyapunovFunctions} and \cite{gruene2022} for approximating separable (control) Lyapunov functions.

The main contribution of this paper is the construction of a separable approximation of the optimal value function of interconnected optimal control problems. The new idea presented in this paper is to use a decaying sensitivity assumption between subsystems with an increasing distance in a corresponding graph. We show how this decrease of sensitivity enables a separable approximation based on neighborhoods in the graph.

The notion of decaying sensitivity is strongly connected to the off-diagonal decay property studied by the numerical linear algebra community. A theoretical framework in this setting has been provided in \cite{benzi2016localization}, where the author proves decay bounds for the entries of general matrix functions and similar results are extended in \cite{haber2016} in an optimal control context, establishing decay properties for the solution of Lyapunov equations.

An exponential decay of sensitivity for nonlinear programs with a graph structure has been studied in \cite{shin2022expDecay}. Based on the results therein, a connection to dynamic optimization and to the particular case of discrete-time linear quadratic problems has been established in \cite{shin2021controllability} and \cite{shin2022lqr}, respectively. An exponential decay of sensitivity has also been discussed in the context of reinforcement learning and for linear quadratic problems in \cite{qu2022scalableReinforcement} and \cite{zhang2023optimal}, respectively. Note that the decaying sensitivity in \cite{shin2022lqr} and \cite{zhang2023optimal} is used for the purpose of an approximate optimal feedback matrix, while we want to approximate the optimal value function. 

\paragraph*{Notation}
For $j \in \N$ we denote $[j] := \lbrace 1, 2, \dots, j \rbrace$. We define $0_{n \times m}$ to be the zero matrix in $\R^{n \times m}$ and set $0_n := 0_{n \times n}$. The entries of a (block)-matrix $A$ are accessed via $A[\cdot, \cdot]$. The comparison space $\mathcal{L}$ denotes all continuous and strictly decreasing functions $\gamma \colon \R_{\geq 0} \to \R_{\geq 0}$ with $\lim_{\tau \to \infty} \gamma(\tau) = 0$. 

The remainder of this paper is organized as follows: In the next section, we introduce the problem. Afterwards, we use a decaying sensitivity assumption to construct separable approximations of optimal value functions. In Section \ref{sec.LQR}, we consider the particular case of discrete-time linear quadratic problems. Numerical test cases are presented in Section \ref{sec.NumExamples}.  

\section{Preliminaries}
\subsection{Optimal Control Problem} \label{subsec.Prelim}
We consider a network of $s \in \N$ agents or subsystems $\mathcal{V} = \lbrace v_1, \dots, v_s \rbrace$. Their interaction is represented via a directed graph $\mathcal{G} = (\mathcal{V}, \mathcal{E})$, where $(i,j) \in \mathcal{E}$ means that the agent $v_i$ influences agent $v_j$. In this case, we call $v_i$ a neighbor of $v_j$ and denote the index set of neighbors of $v_j$ by $	\mathcal{N}_j = \lbrace i \mid (i,j) \in \mathcal{E} \rbrace$. Each agent has a state $x_j \in \R^{n_j}$ and a control variable $u_j \in \R^{m_j}$. Moreover, we assign a cost functional $\ell_j$ to each agent. The vector of all states and the vector of all controls are denoted by $x \in \Rn$ and $u \in \R^m$, respectively, where $n = \sum_{j=1}^s n_j$ and $m = \sum_{j=1}^s m_j$. In total, we  consider an optimal control problem (OCP) either in continuous-time 
\begin{subequations} \label{PF.system}
	\begin{align} 
			\min \; & J(x_0, u) \notag \\ = &  \int_0^\infty e^{-\delta t}\sum_{j=1}^{s} \ell_j(x_j(t), u_j(t), x_{\mathcal{N}_j}(t), u_{\mathcal{N}_j}(t)) \, dt, \notag \\  
			\text{s.t.} \; & \dot{x}_j(t) = f_j(x_j(t), u_j(t), x_{\mathcal{N}_j}(t), u_{\mathcal{N}_j}(t)), \; j \in [s], \notag \\ 
			& x(0) = x_0,  \label{PF.System_cont}
	\end{align}
	or in discrete-time
	\begin{align}  
			\min & \, J(x_0, u) \notag \\ = & \sum_{k=0}^\infty \left(\frac{1}{1+\delta}\right)^{k+1} \sum_{j=1}^{s} \ell_j(x_j(k), u_j(k), x_{\mathcal{N}_j}(k),u_{\mathcal{N}_j}(k)), \notag \\  
			\text{s.t.} \; & x_j(k+1) = f_j(x_j(k), u_j(k), x_{\mathcal{N}_j}(k), u_{\mathcal{N}_j}(k)), \; j \in [s], \notag \\ 
			& x(0) = x_0, \label{PF.System_disc}
	\end{align}
\end{subequations}
where $\delta \in [0,\infty)$ is the discount factor, $x_{\mathcal{N}_j}$ and $u_{\mathcal{N}_j}$ comprise all states and controls belonging to $\mathcal{N}_j$, and $x_0 \in \Rn$ is the initial value. If we do not want to distinguish between the continuous-time formulation \eqref{PF.System_cont} and the discrete-time formulation \eqref{PF.System_disc}, we just refer to the system of interest as system \eqref{PF.system}. Let  $V \colon \Rn \to \R$ denote the optimal value function of the system \eqref{PF.system} given by $V(x) = \inf_u J(x,u)$. Our goal is to compute an approximation of $V$ on a hypercube $\Omega = [-a,a]^n$ for some $a \in \R_{>0}$, where we assume that $V$ is well-defined and finite on $\Omega$. More precisely, we want to compute an approximation that is a separable function.
\begin{definition} \label{def.Separable}
	Let $\Psi \colon D \subset \Rn \to \R$ and $d \in [n]$. We call $\Psi$ a $d$-separable function if there exist $ s \in [n]$ functions $\Psi^j \colon \R^{d_j} \to \R$ with $d_j \leq d$ for $j \in [s]$ such that 
	\begin{equation*}
		\Psi(x) = \sum\nolimits_{j = 1}^s \Psi^j(z_j), \quad x \in D, 
	\end{equation*}
	where $z_j = (x_{j_1}, x_{j_2}, \dots, x_{j_{d_j}})$ for some $j_i \in [n]$. 
\end{definition}
Note that in Definition \ref{def.Separable} we allow the same $x_i$ to appear in different vectors $z_j$ and $z_l$ with $j,l \in [s], j \neq l$. That is, we allow for overlapping domains of the functions $\Psi^j$. \\
The reason for our interest in separable functions is that under sufficient regularity assumptions, $d$-separable functions can be approximated by deep neural networks with a number of neurons that grows only polynomially in the state dimension $n$. This has been shown in \cite[Proposition 1]{gruene2021_ComputingLyapunovFunctions} for separable Lyapunov functions, but an extension to general separable functions is straightforward. For details regarding a curse-of-dimensionality-free approximation of separable functions with deep neural networks, we refer to \cite{poggio2017}, \cite{kang2022}, and \cite{gruene2021_ComputingLyapunovFunctions}. As a consequence, separable approximations of optimal value functions are of interest for the purpose of a curse-of-dimensionality-free computation of an approximate optimal value function.
\subsection{Problem Formulation}
We construct such a separable approximation $\Psi$ of an optimal value function $V$ under a decaying sensitivity assumption. The main idea is to define functions $\Psi^j$, $j \in [s]$, that model the influence of the substate $x_j$ on the optimal value function $V$. To this end, we rely on a decay of sensitivity between the subsystems. One way to formulate this assumption is to require that for $x \in \Omega$ and $i,j \in [s]$, $i \neq j$, the sensitivity 
\begin{equation} \label{PF.sensitivity}
	\delta_{ij}(x) = \frac{\partial (V(x) - V(x_1, \dots, x_{j-1}, 0, x_{j+1}, \dots, x_s))}{\partial x_i}
\end{equation}
decreases as the distance between nodes $v_i$ and $v_j$ increases. This decreasing property then allows us to define functions $\Psi^j$ on certain neighborhoods of $x_j$ in the graph and to neglect the influence of all subsystems outside these neighborhoods. Thus, the domain of each $\Psi^j$ is a lower-dimensional subspace of $\Rn$. This yields the separable form of our approximation. \\ 
In the following section, we construct the functions $\Psi^j$ formally and show that they indeed can be used to approximate the optimal value function under a suitable decay assumption. 

\section{Separable Approximation of the Optimal Value Function}\label{sec.SeparableApp}
Suppose we are given an OCP of the form \eqref{PF.system} with a corresponding graph $\mathcal{G}$. For $i, j \in [s]$ we define $\dist(i,j)$ to be the length of the shortest path in $\mathcal{G}$ from node $v_i$ to $v_j$. Further, for $j, l \in [s]$ we define 
\begin{equation} \label{def.graphNeighborhood}
	\mathcal{B}_l(j) := \lbrace i \in [s] \mid \dist(i,j) \leq l \rbrace 
\end{equation}
to be the neighborhood of node $v_j$ with distance $l$, that is, $\mathcal{B}_l(j)$ comprises the indices of all nodes from which $v_j$ can be reached within $l$ steps. Let $b_l^j := \sum_{i \in \mathcal{B}_l(j)} n_i$, i.e., the dimension of the subspace corresponding to $\mathcal{B}_l(j)$. In the following, we may want to project a vector $x \in \Rn$ onto such a subspace. To this end, for $\mathcal{B}_l(j) = \lbrace \beta_1, \dots, \beta_r \rbrace$ we define the matrix $H_l^{j} \in \R^{b_l^j\times n}$ blockwise by setting 
\begin{equation*}
	H_l^{j}[k,i] := \begin{cases}
		I_{n_i}, & \quad \text{if} \quad \beta_k = i, \\ 
		0_{n_{\beta_k} \times n_i}, & \quad \text{otherwise},  
	\end{cases}
\end{equation*}
for $k \in [r], i \in [s]$. Note that the matrix ${H_l^{j^\top}}$ embeds a vector $x_{\mathcal{B}_l(j)} \in \R^{b_l^j}$ into $\Rn$. Moreover, for $j \in \lbrace 0, \dots, s \rbrace$ we define the block-diagonal matrices 
\begin{equation*} 
	\Pi^{j} := \diag(0_{n_1}, \dots, 0_{n_{j}}, I_{n_{j+1}}, \dots, I_{n_s}) \in \R^{n \times n}.  
\end{equation*}
The matrix $\Pi^{j}$ maps the values of the first $j$ substates of some vector $x \in \Rn$ to $0$. Note that $\Pi^0 = I_n$ and $\Pi^s = 0_n$. Now we are in a position to define the mappings $\Psi_l^j \colon \R^{b_l^j} \to \R$ via 
\begin{equation} \label{def.Psi_j}
	x_{\mathcal{B}_l(j)} \mapsto V \big(\Pi^{j-1} {H_l^{j^\top}}x_{\mathcal{B}_l(j)} \big) - V\big(\Pi^{j} {H_l^{j^\top}} x_{\mathcal{B}_l(j)}\big). 
\end{equation}
Such a function $\Psi_l^j$ takes the substates of the neighborhood of distance $l$ of the node $v_j$ as an input, lifts the corresponding vector into the whole state space and then considers the difference (under $V$) of setting the first $j-1$ and the first $j$ substates to $0$, respectively. We illustrate the construction in \eqref{def.Psi_j} with the example of the semi-discrete heat equation: 
\begin{example}\label{ex.Heat}
	Consider the one-dimensional heat equation with diffusion coefficient $\sigma$, where the state space is discretized via centered finite differences with discretization parameter $\Delta x$. We add a distributed control and quadratic costs to the resulting semi-discrete equation. The cost functional is given as
	\begin{equation*}
		J(x_0, u) = \int_{0}^{\infty} x^\top(t) x(t) + u^\top(t) u(t) \, dt 
	\end{equation*}
	and the linear dynamics are 
	\begin{equation}
		\dot{x}(t) = \frac{\sigma}{(\Delta x)^2} \begin{bmatrix}
			-2 & 1 & 0 & \cdots  \\ 
			1 & -2 & 1 & \cdots  \\ 
			0 & 1 & -2 & \cdots  \\ 
			\vdots & \ddots & \ddots & \ddots  \\ 
		\end{bmatrix} x(t) + I_n u(t), 
	\end{equation}
	for constants $\sigma, \Delta x >0$. Then the corresponding graph $\mathcal{G}$ for $n=s=5$ has the form displayed in Figure \ref{fig.HeatNeighborhoods}. The sets $\mathcal{B}_1(1)$, $\mathcal{B}_1(2)$, and $\mathcal{B}_1(3)$ are circled in red, orange, and green, respectively. In this scenario, we have 
	\begin{align*}
		& \Psi_1^1(x_1, x_2) = V(x_1, x_2, 0, 0, 0) - V(0, x_2, 0, 0, 0), \\
		& \Psi_1^2(x_1, x_2, x_3) = V(0, x_2, x_3, 0, 0) - V(0, 0, x_3, 0,0), \\
	 	& \Psi_2^1(x_1, x_2, x_3) =  V(x_1, x_2, x_3, 0, 0) - V(0, x_2, x_3, 0, 0). 
	\end{align*}
  \vspace{-0.7cm}
	\begin{figure}[h]
		\centering
		\includegraphics[scale=0.75]{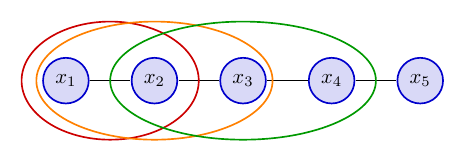}	
		\caption{Neighborhoods in the graph of the semi-discrete heat equation.}
  \vspace{-0.3cm}
		\label{fig.HeatNeighborhoods}
	\end{figure}
\end{example}
On the one hand, the use of $\Pi^{j}$ in \eqref{def.Psi_j} originates from the way we want to approximate the optimal value function $V$. The function $\Psi_l^1$ models the influence of the first substate on the optimal value function $V$. Hence, by using $\Psi_l^1$, we can reduce the computation of $V(x)$ to the computation of $V(0, x_2, \dots, x_n)$. We thus reduce the considered state space to $\lbrace 0 \rbrace^{n_1} \times \R^{n - n_1}$. Repeating this for $j = 2, \dots, s-1$, we finally arrive at the subspace $ \lbrace 0 \rbrace^{n-n_s} \times \R^{n_s}$, where we can use $\Psi_l^s$ to reduce the evaluation of $V(x)$ to the evaluation of $V$ at the origin. This procedure is illustrated geometrically for the case of $n=s=3$ in Figure \ref{fig.cube}. 
\begin{figure}[ht] \centering
    \includegraphics[scale=0.93]{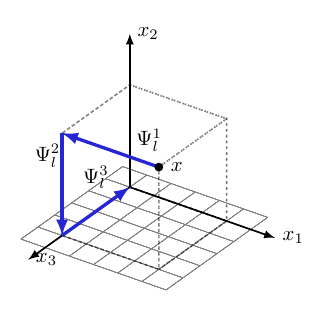}	
	\caption{Stepwise approximation of a point $x \in \R^3$ with the functions $\Psi_l^j$.}
    \vspace{-0.5cm}
	\label{fig.cube}
\end{figure}
On the other hand, the reason for using ${H_l^{j^\top}}$, i.e., for defining $\Psi_l^j$ on the subspace induced by $\mathcal{B}_l(j)$, is that we want the component functions of our approximation to be defined on lower-dimensional spaces, see Definition \ref{def.Separable}. To ensure that this local restriction approximates the global behavior, we rely on the following sensitivity assumption: 
\begin{assumption} \label{ass.Decay}
	There exists a function $\gamma \in \mathcal{L}$ such that 
    \begin{equation} \label{eq.Diff_V_and_V_j}
		\lvert \Psi_l^j(H_l^{j} x) - V(\Pi^{j-1} x) + V(\Pi^{j} x) \rvert \leq \gamma(l+1)  \tag{A1}
	\end{equation} 
 holds for all $l \in [s]$, $j \in [s]$, and $x \in \Rn$. 
\end{assumption}
The term $ V(\Pi^{j} x)-V(\Pi^{j-1} x)$ determines the influence of $x_j$ on $V$ in a global manner, i.e., with respect to all other subsystems. In \eqref{eq.Diff_V_and_V_j}, we compare this global difference with the local one obtained by $\Psi_l^j$. Note that in the case $l = s$, the left-hand side in \eqref{eq.Diff_V_and_V_j} is $0$. Now the statement of Assumption \ref{ass.Decay} is that with an increasing distance $l$ of the neighborhoods $\mathcal{B}_l(j)$, the error decreases as given by the $\mathcal{L}$-function $\gamma$. This means that the influence of nodes $v_i$ on the node $v_j$ decreases according to $\gamma$ with an increasing distance $\dist(i,j)$. In the case of a smooth optimal value function, Assumption \ref{ass.Decay} follows from the sensitivity inequality displayed in \eqref{PF.sensitivity}. 
\begin{lemma}\label{lem.Sensitivity}
    Consider an OCP of the form \eqref{PF.system} with a $\mathcal{C}^1$-optimal value function $V$. Define $\delta_{ij}$ as in \eqref{PF.sensitivity} and assume that there exists $\tilde{\gamma} \in \mathcal{L}$ such that for all $i,j \in [s]$, $i \neq j$, and $x \in \Omega$ it holds that 
    \begin{equation} \label{eq.SensitivityDecay}
        \lvert \delta_{ij}(x) \rvert \leq \tilde{\gamma}(\dist(i,j)). 
    \end{equation}
    Then Assumption \ref{ass.Decay} holds with $\gamma(\cdot) = s \max_{x \in \Omega} \lVert x \rVert \, \tilde{\gamma}(\cdot)$. 
\end{lemma}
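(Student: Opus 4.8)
The plan is to unfold the definition of $\Psi_l^j$, to recognise the left-hand side of \eqref{eq.Diff_V_and_V_j} as a difference of the ``sensitivity potential'' $W_j(y) := V(y) - V(y_1,\dots,y_{j-1},0,y_{j+1},\dots,y_s)$ evaluated at two nearby points, and then to control that difference by integrating the pointwise bound \eqref{eq.SensitivityDecay} along the segment joining them. Since \eqref{eq.SensitivityDecay} is assumed only on $\Omega$, all points occurring below are taken in $\Omega$, which is all that is needed because the approximation is built on $\Omega$.

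First I would introduce the block-diagonal projection $P_l^{j} := {H_l^{j^\top}} H_l^{j} \in \R^{n\times n}$, which carries $I_{n_i}$ in the blocks $i \in \mathcal{B}_l(j)$ and $0$ elsewhere. Because $P_l^{j}$, $\Pi^{j-1}$ and $\Pi^{j}$ are all block-diagonal with $0$/identity blocks, they commute pairwise, and $\Pi^{j} = Q_j \Pi^{j-1}$, where $Q_j$ zeroes only the $j$-th block; note that $j \in \mathcal{B}_l(j)$, so $P_l^{j}$ leaves the $j$-th block alone. Setting $y := \Pi^{j-1} x$ and $z := P_l^{j} y = \Pi^{j-1} {H_l^{j^\top}} H_l^{j} x$, the definition \eqref{def.Psi_j} gives $\Psi_l^j(H_l^{j} x) = V(z) - V(Q_j z)$, while $V(\Pi^{j-1} x) - V(\Pi^{j} x) = V(y) - V(Q_j y)$; hence the bracketed quantity in \eqref{eq.Diff_V_and_V_j} equals $W_j(z) - W_j(y)$. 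By construction $y$ and $z$ can differ only in the blocks $i$ with $i \notin \mathcal{B}_l(j)$ and $i \geq j$; call this index set $S$. Since $\dist(j,j) = 0 \leq l$, we have $S \subseteq \lbrace j+1, \dots, s\rbrace$ (so $|S| < s$), every $i \in S$ satisfies $\dist(i,j) \geq l+1$, and for $i \in S$ one has $z_i = 0$ and $y_i = x_i$.

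Next I would apply the fundamental theorem of calculus along $\phi(t) := y + t(z - y)$, $t \in [0,1]$. As $\Omega$ is a hypercube, $x \in \Omega$ forces $y, z \in \Omega$, and convexity gives $\phi([0,1]) \subseteq \Omega$; the $\mathcal{C}^1$-regularity of $V$, hence of $W_j$, then yields $W_j(z) - W_j(y) = \int_0^1 \sum_{i \in S} \langle \partial_{x_i} W_j(\phi(t)),\, z_i - y_i\rangle \, dt = -\int_0^1 \sum_{i \in S} \langle \delta_{ij}(\phi(t)),\, x_i\rangle \, dt$, using $\partial_{x_i} W_j = \delta_{ij}$ for $i \neq j$. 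Taking absolute values and invoking \eqref{eq.SensitivityDecay}, the monotonicity of $\tilde\gamma$ (so $\tilde\gamma(\dist(i,j)) \leq \tilde\gamma(l+1)$ on $S$) and $\lVert x_i\rVert \leq \lVert x\rVert$ gives $\lvert W_j(z) - W_j(y)\rvert \leq |S|\, \tilde\gamma(l+1) \max_{x \in \Omega}\lVert x\rVert \leq s\, \tilde\gamma(l+1) \max_{x\in\Omega}\lVert x\rVert$. Defining $\gamma(\tau) := s\,\big(\max_{x\in\Omega}\lVert x\rVert\big)\,\tilde\gamma(\tau)$, which lies in $\mathcal{L}$ as a positive multiple of an $\mathcal{L}$-function, establishes \eqref{eq.Diff_V_and_V_j} in the claimed form; the cases $l = s$ (where $\mathcal{B}_s(j) = [s]$, so $P_s^{j} = I$ and the left-hand side is $0$) and $S = \emptyset$ are covered trivially.

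The routine parts are the commutation bookkeeping with the block-diagonal matrices and the closing triangle-inequality estimate. The step needing the most care is the second one: pinning down that $y$ and $z$ differ only in blocks at graph distance $\geq l+1$ from $v_j$ — this is exactly where the neighbourhood sets $\mathcal{B}_l(j)$ and the ordering-dependent projections $\Pi^{j-1}, \Pi^{j}$ interact — and verifying that the segment from $y$ to $z$ never leaves $\Omega$, so that the pointwise sensitivity bound \eqref{eq.SensitivityDecay} is legitimately applicable along it; both facts rely on $\Omega$ being a box.
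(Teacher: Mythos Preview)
Your proof is correct and follows essentially the same route as the paper: introduce the auxiliary function $W_j$ (the paper calls it $V_j$), rewrite the left-hand side of \eqref{eq.Diff_V_and_V_j} as $W_j$ evaluated at two points that differ only in blocks $i\notin\mathcal{B}_l(j)$, and then apply the mean value theorem along the connecting segment in $\Omega$ together with the bound \eqref{eq.SensitivityDecay}. Your bookkeeping is in fact slightly tidier than the paper's (you make the commutation of the block-diagonal projections explicit and isolate the precise index set $S\subseteq\{j+1,\dots,s\}$, giving $|S|<s$ rather than just $\le s$), but the argument and the resulting bound $\gamma(\tau)=s\,(\max_{x\in\Omega}\lVert x\rVert)\,\tilde\gamma(\tau)$ are the same.
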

\begin{proof}
	For $j \in [s]$ define $V_j \colon \Rn \to \R$ via 
	\begin{equation*}
		x \mapsto V(x) - V(x_1, \dots, x_{j-1}, 0, x_{j+1}, \dots, x_s).
	\end{equation*}
	By assumption, we have
	\begin{align*}
		\left \lVert \frac{\partial V_j}{\partial x_i} \right \rVert_{\infty, \Omega}\leq \tilde{\gamma}(\dist(i,j)), \quad i,j \in [s], i \neq j. 
	\end{align*}
Now fix some $j, l \in [s]$. Let $y := H_l^{j^\top} H_l^{j} x \in \Rn$. Using \eqref{def.Psi_j} and the convexity of $\Omega$, we obtain 
	\begin{align*}
		& \lvert \Psi_l^j(H_l^{j} x) - V(\Pi^{j-1} x) + V(\Pi^{j} x) \rvert \\ 
		= \, &  \lvert V_j(\Pi^{j-1} y) - V_j( \Pi^{j-1} x) \rvert \\ 
		\leq \, & \lVert \Pi^{j-1}(x - y) \rVert \int_0^1 \lVert \nabla V_j(\Pi^{j-1}((1-\tau) x + \tau y)) \rVert \, d\tau \\
		\leq \, & \lVert x - y \rVert \, \sum_{i \notin \mathcal{B}_l(j)} \left \lVert \frac{\partial V_j}{\partial x_i} \right \rVert_{\infty, \Omega} 
		\leq s \max_{x \in \Omega} \lVert x \rVert \, \tilde{\gamma}(l+1), 
	\end{align*}
 where we used that $x-y \in \Omega$ and $\Pi^{j-1} \Omega \subset \Omega$. 
\end{proof}
Note that Lemma \ref{lem.Sensitivity} is independent of the structure of the graph corresponding to the OCP. In particular, the last estimate in the proof considers the case where for all nodes $i \notin \mathcal{B}_l(j)$ we have $\dist(i,j) = l+1$. Depending on the graph-structure and the function $\tilde{\gamma}$, we may be able to avoid the dependence on the number of subsystems $s$ in the last estimate of the proof of Lemma \ref{lem.Sensitivity}. For instance, consider a sequential graph as in Example \ref{ex.Heat}, where $\dist(i,j) = \lvert i - j \rvert$, and assume an exponential decay given through $\tilde{y}(l) = \rho^l$ for some $\rho \in (0,1)$. Then we can modify the last estimate in the proof of Lemma \ref{lem.Sensitivity} by 
\begin{align*}
    & \lVert x - y \rVert \, \sum_{i \notin \mathcal{B}_l(j)} \left \lVert \frac{\partial V_j}{\partial x_i} \right \rVert_{\infty, \Omega} \\ 
    \leq \, & \lVert x - y \rVert \left( \sum_{i=1}^{j-l-1} \rho^{j-i} + \sum_{i=j+l+1}^{s} \rho^{i-j} \right) \leq  \frac{2\lVert x - y \rVert}{1-\rho}. 
\end{align*}

The following theorem shows that the functions $\Psi_l^j$ defined in this section together with Assumption \ref{ass.Decay} indeed allow to construct an approximation of the optimal value function. 
\begin{theorem} \label{thm.SepApprox}
	Consider an OCP of the form \eqref{PF.system} and let Assumption \ref{ass.Decay} hold. Then for all $l \in [s]$ and $x \in \Omega$ it holds \begin{equation} \label{eq.ApproxV}
		\lvert V(x)  - \sum_{j=1}^s \Psi_l^j(H_l^{j} x) - V(0) \rvert \leq (s-1) \gamma(l+1).
	\end{equation}
\end{theorem}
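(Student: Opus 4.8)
The plan is to read a telescoping structure off the definition \eqref{def.Psi_j} and then sharpen the resulting estimate by exactly one term. For $j \in [s]$ introduce the global increments $a_j := V(\Pi^{j-1}x) - V(\Pi^j x)$, where $\Pi^0 := I_n$ in accordance with the pattern defining $\Pi^j$ (and consistent with Example \ref{ex.Heat}). Since $\Pi^0 = I_n$ and $\Pi^s = 0_n$, the sum of the $a_j$ telescopes to
\[
  \sum_{j=1}^s a_j = V(\Pi^0 x) - V(\Pi^s x) = V(x) - V(0).
\]
Hence the quantity in \eqref{eq.ApproxV} equals $\bigl| \sum_{j=1}^s \bigl( a_j - \Psi_l^j(H_l^j x) \bigr) \bigr|$, and the triangle inequality together with Assumption \ref{ass.Decay} already gives the slightly weaker bound $s\,\gamma(l+1)$. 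The real content of the proof is therefore to show that one of the $s$ summands is identically zero.

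That summand is the one with $j = s$. The key observation is that both $\Pi^{s-1}$ and the projection $H_l^{s^\top}H_l^s$ are block-diagonal with each diagonal block equal to an identity or a zero block; consequently they commute, and the product $\Pi^{s-1}H_l^{s^\top}H_l^s$ has its $i$-th block equal to $I_{n_i}$ precisely when simultaneously $i = s$ and $i \in \mathcal{B}_l(s)$. Because $\dist(s,s) = 0 \le l$, one always has $s \in \mathcal{B}_l(s)$, so $\Pi^{s-1}H_l^{s^\top}H_l^s = \Pi^{s-1}$; likewise $\Pi^s H_l^{s^\top}H_l^s = \Pi^s = 0_n$. Substituting $x_{\mathcal{B}_l(s)} = H_l^s x$ into \eqref{def.Psi_j} then yields $\Psi_l^s(H_l^s x) = V(\Pi^{s-1}x) - V(0) = a_s$, i.e., $a_s - \Psi_l^s(H_l^s x) = 0$.

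Putting the pieces together, I would conclude
\begin{align*}
  \Bigl| V(x) &- \sum_{j=1}^s \Psi_l^j(H_l^j x) - V(0) \Bigr|
  = \Bigl| \sum_{j=1}^{s-1} \bigl( a_j - \Psi_l^j(H_l^j x) \bigr) \Bigr| \\
  &\le \sum_{j=1}^{s-1} \bigl| \Psi_l^j(H_l^j x) - V(\Pi^{j-1}x) + V(\Pi^j x) \bigr|
  \le (s-1)\,\gamma(l+1),
\end{align*}
where the final inequality is \eqref{eq.Diff_V_and_V_j} applied for $j = 1, \dots, s-1$.

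I expect the main obstacle to be the identity $\Psi_l^s(H_l^s x) = a_s$: it is the only step that is not pure bookkeeping, and it requires tracking the block structure of $\Pi^{s-1}$, $H_l^s$ and $H_l^{s^\top}$ and using $s \in \mathcal{B}_l(s)$. Everything else — the telescoping, the triangle inequality, and the convention $\Pi^0 := I_n$ — is routine. I would also point out explicitly that Assumption \ref{ass.Decay} is invoked only for indices $j \le s-1$, which is exactly what produces the constant $s-1$ rather than $s$; without noticing the vanishing of the $j=s$ term the argument would still go through, but only with the weaker constant $s$.
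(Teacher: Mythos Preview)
Your proof is correct and rests on the same two ingredients as the paper's: the telescoping identity $\sum_j a_j = V(x)-V(0)$ together with Assumption~\ref{ass.Decay}, and the observation that one of the $s$ terms is exactly zero. The only difference is organizational: the paper packages this as an induction on $s$, peeling off $j=1$ at each step and using the base case $s=1$ (where $\Psi_l^1(x_1)=V(x_1)-V(0)$) to absorb the ``free'' term, whereas you write out the telescoping sum directly and locate the vanishing term at $j=s$; unrolling the paper's induction recovers precisely your argument, so the two are equivalent, with your version arguably more transparent about why the constant is $s-1$ rather than $s$.
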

\begin{proof}
	We prove the claim by induction over $s$. If $s=1$, we have $\mathcal{B}_l(1) = \lbrace 1 \rbrace$. Thus, from \eqref{def.Psi_j} we get
	\begin{equation*}
		\Psi_1^1(x_1) = V(x_1) - V(0),  
	\end{equation*}
	whence \eqref{eq.ApproxV} follows. Now assume the assertion holds for $s-1$. First, observe that 
	\begin{align*}
		& \lvert V(x) -  \sum_{j=1}^s \Psi^j_l(H_l^{j} x) - V(0) \rvert \\ \leq & \lvert V(x) - \Psi_l^1(H_l^{1} x) - V(\Pi^{1}x) \rvert \\ & +  \lvert V(\Pi^{1}x) - \sum_{j=2}^s \Psi^j_l(H_l^{j} x) - V(0) \rvert \\ \leq \, & \gamma(l+1) + \lvert V(\Pi^{1} x) - \sum_{j=2}^s \Psi^j_l(H_l^{j} x) - V(0) \rvert, 
	\end{align*}
	by using \eqref{eq.Diff_V_and_V_j} for $j=1$. Note that in the evaluation of $V(\Pi^{1} x)$ as well as in all evaluations $\Psi^j_l(H_l^{j} x)$ for $j \geq 2$, the value of $x_1$ is set to $0$, i.e., the respective functions operate on the subspace $\Pi^{1} \Rn$. Thus, we can use the isomorphism $\Pi^{1} \R^{n} \cong \R^{n-n_1}$ to interpret the expression 
    \begin{equation*}
        \lvert V(\Pi^{1} x) - \sum_{j=2}^s \Psi_l^j(H_l^{j} x) - V(0) \rvert
    \end{equation*}
    as an equation of the form \eqref{eq.ApproxV} on $\R^{n-n_1}$ with $s-1$ substates. Since \eqref{eq.Diff_V_and_V_j} holds for all $j = 2, \dots, s$, we can use the induction hypothesis to obtain
	\begin{equation*}
		\lvert  V(\Pi^1 x) - \sum_{j=2}^s \Psi_l^j(H_l^{j} x) - V(0) \rvert \leq (s-2) \gamma(l+1). 
	\end{equation*}
	This completes the proof.  
\end{proof}
Theorem \ref{thm.SepApprox} states that we can approximate the optimal value function $V$ with the function $\Psi := \sum_{j=1}^s \Psi_l^j + V(0)$. This is a $d$-separable function for $d := \max_{j \in [s]} b_l^j$. 
\begin{remark}
   As depicted in Figure \ref{fig.cube}, the functions $\Psi_l^j$ allow for an approximation of $V$ on the whole cube $\Omega$ under the knowledge of the evaluation of $V$ at $0$. However, there is no need to fix the origin for this evaluation, but one may use any vector $\omega \in \Omega$. By defining affine-linear mappings $\tilde{\Pi}^j \colon \Rn \to \Rn$, $j \in \lbrace 0, \dots, s \rbrace $, via 
   \begin{equation*}
        x \mapsto \Pi^j x + \begin{bmatrix}
            \omega_1 & \dots & \omega_j & 0 & \dots & 0
        \end{bmatrix}^\top, 
   \end{equation*}
   and by replacing $\Pi^j$ with $\tilde{\Pi^j}$ in the definition of $\Psi_l^j$ in \eqref{def.Psi_j}, one obtains an approximation of the form $V \approx \sum_{j=1}^s \Psi_l^j + V(\omega)$. The choice of $\omega$ may depend on the knowledge of (an approximation of) $V$ at $\omega$. In many cases, e.g., for linear-quadratic problems as discussed in Section \ref{sec.LQR}, one has that $V(0)=0$. For the sake of a simple presentation, we stick to the case $\omega = 0$ for the rest of this paper.  
\end{remark} 
\begin{remark}
    Suppose that we want to approximate $V$ with $\Psi$ satisfying a given error bound $\varepsilon \in \R_{>0}$. Theorem \ref{thm.SepApprox} yields that this can be ensured by choosing $l$ large enough, so that 
    \begin{equation} \label{eq.errorBound}
        \gamma{(l+1)} \leq \frac{\varepsilon}{(s-1)}.
    \end{equation}
    Since the number of neurons in the neural network grows exponentially with $d$, we have to ensure that $d$ does not grow too fast with $s$ in order to avoid the curse of dimensionality. This is possible, e.g., if $\gamma(l)$ decreases exponentially in $l$. More precisely, if $\gamma(l) = \rho^l$ for some $\rho \in (0,1)$, one has to choose $l \geq \log_{\rho}(\frac{\varepsilon}{s-1}) - 1$. Now consider an increasing number of subsystems $s$. If the dimensions $b_l^j$ of the subspaces belonging to the neighborhoods $\mathcal{B}_l(j)$ are increasing subexponentially with $s$ and a growth of $l$ that ensures \eqref{eq.errorBound}, we can avoid the curse of dimensionality in the computation of the functions $\Psi_l^j$. 
\end{remark}
\begin{remark}
	The definition of the mappings $\Psi_l^j$ in \eqref{def.Psi_j} uses overlapping domains. This is in contrast to the works \cite{gruene2021_ComputingLyapunovFunctions} and \cite{gruene2022}, where a small-gain condition was used to prove the existence of a separable (control) Lyapunov function of the form 
	\begin{equation} \label{eq.SmallGainSeparable}
		V(x) = \sum_{j=1}^s V_j(x_j). 
	\end{equation}
	If $V$ can be written in the form \eqref{eq.SmallGainSeparable}, then Assumption \ref{ass.Decay} is trivially satisfied, since 
	\begin{equation*}
		\Psi_l^j(x_{\mathcal{B}_l(j)}) = V_j(x_j) - V_j(0) = V(\Pi^{j-1}x) - V(\Pi^{j} x), 
	\end{equation*}
	whence the left-hand side in \eqref{eq.Diff_V_and_V_j} is always $0$. Thus, one may interpret the present construction with overlapping domains as a method to allow for more general decompositions if there is no representation of $V$ as in \eqref{eq.SmallGainSeparable}. However, as a trade-off we no longer have an exact representation of the optimal value function in a separable form. 
\end{remark}


\section{Discrete-Time LQR} \label{sec.LQR}
In this section, we examine when our assumptions are satisfied for undiscounted discrete-time systems of the form \eqref{PF.System_disc} with linear dynamics and quadratic costs. It is known that one can compute the solution $P \in \R^{n \times n}$ of the discrete-time algebraic Riccati equation to obtain the optimal value function $V(x) = x^\top P x$. Thus, there is no need to use the proposed separable approximation technique to compute $V$. However, linear quadratic discrete-time problems allow for an explicit check whether Assumption \ref{ass.Decay} holds. Thus, we use this class of systems to argue that our assumptions are reasonable and to illustrate the computations. The corresponding OCP takes the form 
\begin{align} \begin{split}  \label{eq.LQR_discrete}
		\min \; & J(x_0, u) = \sum_{k=0}^\infty x(k)^\top Q x(k) + u(k)^\top R u(k),  \\  
		\text{s.t.} \; & x(k+1) = Ax(k) + Bu(k), \\ 
		& x(0) = x_0, 
\end{split} \end{align}
where $A \in \R^{n \times n}$, $B \in \R^{n \times m}$, $Q \in \R^{n \times n}$ is positive semidefinite and $R \in \R^{m \times m}$ is positive definite. We assume that $(A,B)$ is stabilizable, $(A,Q^{\frac{1}{2}})$ is detectable, and that $\mathcal{G}$ is undirected, i.e., we have $(i,j) \in \mathcal{E}$ if and only if $(j,i) \in \mathcal{E}$. Note that due to the definition of $\mathcal{G}$ for some $i,j \in [s]$ with $\dist(i,j) > 1$ it holds that all block matrices $A[i,j]$, $B[i,j]$, $Q[i,j]$, and $R[i,j]$ are zero. For this setting, an exponential decay property for the optimal feedback matrix $K = (B^\top P B + R)^{-1} (B^\top P A)$ has been shown in \cite{shin2022lqr}.
\begin{definition}[cf. Def. 1 in \cite{zhang2023optimal}]
Let $\mathcal{G}$ be a graph with $s$ vertices. We say that a matrix $X \in \mathbb{R}^{n \times n}$ given by block-matrices $X[i,j]$, $i,j \in [s]$, is $(C_X,\rho_X)$-spatially exponential decaying (SED) with respect to $\mathcal{G}$ for some $C_X \in \R_{\geq 0}$ and $ \rho_X \in (0,1)$ if for all $i,j \in [s]$ it holds that
$$
|X[i,j]| \le C_X \rho_X^{\dist(i,j)}. 
$$
\end{definition}
Since we are considering a fixed graph $\mathcal{G}$ corresponding to \eqref{eq.LQR_discrete}, we omit the dependence on $\mathcal{G}$ in the following. Under suitable assumptions it has been shown in \cite{shin2022lqr} that there exist $C_K \in \R_{\geq 0}$ and $\rho_K \in (0,1)$ independently of $s$ such that the optimal feedback matrix $K$ is $(C_K, \rho_K)$-SED. For details, we refer to the Chapters 3 and 5 and in particular to Theorem 3.3 in \cite{shin2022lqr}. We now discuss how this decay property yields Assumption \ref{ass.Decay}. Let $l \in [s]$ and define the matrix $P^l \in \R^{n \times n}$ blockwise by
\begin{equation} \label{def.Pl}
    P^l[i,j] = \begin{cases}
       & P[i,j], \quad \text{if} \, \dist(i,j) \leq l, \\
      & 0_{n_i \times n_j}, \quad \text{otherwise}. 
    \end{cases}
\end{equation}
Then the quadratic form $x^\top P^l x$ equals the approximation constructed in Section \ref{sec.SeparableApp}, as the following lemma shows. 
\begin{lemma} \label{lem.QuadraticApprox}
    Consider an OCP of the form \eqref{eq.LQR_discrete} and let $l \in [s]$. Define the functions $\Psi_l^j$ as in \eqref{def.Psi_j}. Then it holds that 
    \begin{equation*}
        \sum_{j=1}^s \Psi_l^j(H_l^{j} x) = x^\top P^l x, \quad x \in \Omega.  
    \end{equation*}
\end{lemma}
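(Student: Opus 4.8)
The plan is to expand both sides explicitly in terms of the Riccati matrix $P$, using $V(x) = x^\top P x$, and to match blocks with the definition \eqref{def.Pl} of $P^l$. First I would unwind the projections appearing in \eqref{def.Psi_j}. For fixed $j, l \in [s]$, the matrix $H_l^{j^\top} H_l^{j}$ is the symmetric block-diagonal idempotent whose $(i,i)$-block is $I_{n_i}$ when $i \in \mathcal{B}_l(j)$ and $0_{n_i}$ otherwise; composing it with $\Pi^{j-1}$ (respectively $\Pi^{j}$) additionally annihilates the first $j-1$ (respectively $j$) blocks. Hence $\Pi^{j-1} H_l^{j^\top} H_l^{j}$ is the coordinate projection onto the blocks indexed by $S_j := \{ i \in \mathcal{B}_l(j) : i \ge j \}$ and $\Pi^{j} H_l^{j^\top} H_l^{j}$ the projection onto $T_j := \{ i \in \mathcal{B}_l(j) : i > j \}$. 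Since $\dist(j,j) = 0$ we always have $j \in \mathcal{B}_l(j)$, so $S_j = T_j \cup \{ j \}$ with $j \notin T_j$.

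Substituting these projections into \eqref{def.Psi_j} and using $V(x) = x^\top P x$, for any $x$ I would obtain
\begin{align*}
	\Psi_l^j(H_l^{j} x) &= \sum_{i,i' \in S_j} x_i^\top P[i,i'] x_{i'} - \sum_{i,i' \in T_j} x_i^\top P[i,i'] x_{i'} \\
	&= x_j^\top P[j,j] x_j + \sum_{i \in T_j} \bigl( x_j^\top P[j,i] x_i + x_i^\top P[i,j] x_j \bigr),
\end{align*}
because $S_j \times S_j$ differs from $T_j \times T_j$ exactly by the pairs $(i,i')$ with $j \in \{i,i'\}$. Summing over $j \in [s]$, the diagonal terms give $\sum_{j} x_j^\top P[j,j] x_j$, and for every ordered pair $(j,i)$ with $j < i$ and $i \in \mathcal{B}_l(j)$ (i.e. $\dist(i,j) \le l$) one picks up the cross contribution $x_j^\top P[j,i] x_i + x_i^\top P[i,j] x_j$.

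On the other side, expanding $x^\top P^l x = \sum_{i,i'} x_i^\top P^l[i,i'] x_{i'}$ via \eqref{def.Pl}: the diagonal part is $\sum_i x_i^\top P[i,i] x_i$ since $\dist(i,i) = 0 \le l$, and grouping the off-diagonal part by unordered pairs $\{i,i'\}$ with $i < i'$ yields $x_i^\top P[i,i'] x_{i'} + x_{i'}^\top P[i',i] x_i$ whenever $\dist(i,i') \le l$. Because $\mathcal{G}$ is undirected, $\dist(i,i') = \dist(i',i)$, so the set of index pairs $\{ (j,i) : j < i,\ \dist(i,j) \le l \}$ arising from $\sum_j \Psi_l^j(H_l^{j} x)$ is exactly the set of pairs retained in $x^\top P^l x$, and the associated block quadratic forms coincide term by term. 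Comparing the two expansions then gives the claimed identity on $\Omega$ (both sides being polynomials in $x$).

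The work is entirely bookkeeping: tracking the three nested index sets $T_j \subset S_j \subset \mathcal{B}_l(j)$, checking that the subtraction defining $\Psi_l^j$ leaves precisely the ``$j$-th row and column'' blocks of $P$ restricted to $\mathcal{B}_l(j)$, and recording explicitly that the undirectedness hypothesis on $\mathcal{G}$ is what allows identifying the condition $\dist(i,j) \le l$ with $\dist(j,i) \le l$. There is no analytic obstacle; the only thing that could go wrong is a sign or an off-by-one in the index ranges of $S_j$ and $T_j$, which is why I would pin those sets down explicitly at the outset.
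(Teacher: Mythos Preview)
Your proposal is correct and follows essentially the same approach as the paper: both expand $\Psi_l^j(H_l^{j} x)$ via $V(x)=x^\top P x$, identify that the subtraction leaves exactly the $j$-th block row/column of $P$ restricted to $\mathcal{B}_l(j)\cap\{i\ge j\}$, and then sum over $j$ to recover $x^\top P^l x$. Your write-up is in fact slightly more careful than the paper's, as you make explicit where the undirectedness of $\mathcal{G}$ (hence symmetry of $\dist$) is used to match the cross terms with the off-diagonal blocks of $P^l$.
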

\begin{proof}
    Let $j \in [s]$ be fixed. We have 
    \begin{align*}
        & \Psi_l^j(H_l^{j} x) =  V\big(\Pi^{j-1} {H_l^{j^\top}} H_l^{j} x \big) - V\big(\Pi^{j} {H_l^{j^\top}} H_l^{j} x \big) \\ 
        = \, & x^\top {H_l^{j^\top}} H_l^{j} \, \big( \Pi^{j-1^\top} P \Pi^{j-1} - \Pi^{j^\top} P \Pi^{j} \big) \,  {H_l^{j^\top}} H_l^{j} x. 
    \end{align*}
    Observe that $\Pi^{j-1^\top} P \Pi^{j-1} - \Pi^{j^\top} P \Pi^{j} $ yields all blocks $P[i,j]$ for $i \geq j$ and the matrix ${H_l^{j^\top}} H_l^{j} \in \R^{n \times n}$ then eliminates all blocks with $\dist(i,j) > l$. Thus, we get 
    \begin{equation*} \label{eq.PsiQuadratic}
        \Psi_l^j(H_l^{j} x) = x_j^\top P[j,j] x_j + \sum_{\substack{i > j, \\ \dist(i,j) \leq l}} 2 x_j^\top P[j,i] x_i. 
    \end{equation*}
    Finally, summing up over $j=1, \dots, s$ and using \eqref{def.Pl} completes the proof. 
\end{proof}
Lemma \ref{lem.QuadraticApprox} shows that a decay of sensitivity in the matrix $P$ is a sufficient condition for Assumption \ref{ass.Decay}. In the following, we prove that for a sequential graph as in Example \ref{ex.Heat} the matrix $P$ is SED. To this end, we build on the SED-property of the matrix $K$ that has been shown in \cite{shin2022lqr}. Due to space limitations, the proof can be found in the arXiv version of the paper, see \cite{sperl2023separable}. 
\begin{proposition} \label{prop.DecayP}
    Consider an OCP of the form \eqref{eq.LQR_discrete} and assume that for $i,j \in [s]$ we have $\dist(i,j) = | i - j |$. Then the solution $P$ of the discrete-time algebraic Riccati equation is $(C_P, \rho_P)$-SED for constants $C_P$, $\rho_P$ independent of $s$.  
\end{proposition}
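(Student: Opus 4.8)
The plan is to pass from the SED bound on the feedback matrix $K$ to one on $P$ through the closed-loop Lyapunov (Stein) equation. Write $A_K := A - BK$ for the closed-loop matrix and $W := Q + K^\top R K \succeq 0$; then the stabilizing solution of the discrete algebraic Riccati equation satisfies $P = W + A_K^\top P A_K$, hence $P = \sum_{t=0}^\infty (A_K^t)^\top W A_K^t$. The assumptions under which \cite{shin2022lqr} establishes the $(C_K,\rho_K)$-SED bound on $K$ also provide $s$-uniform constants $M \ge 1$, $\lambda \in (0,1)$ with $\lVert A_K^t \rVert_2 \le M \lambda^t$ (a uniform Schur-stability margin for the closed loop). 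Moreover the $(C_K,\rho_K)$-SED bound forces $\lVert K \rVert_2$ to be $s$-uniform, since on a path graph the rows and columns of $K$ have uniformly summable entries; together with the $s$-uniform bounds on $A,B,Q,R$ this makes $\lVert W\rVert_2$ and $\lVert P\rVert_2 \le M^2 \lVert W\rVert_2/(1-\lambda^2)$ bounded independently of $s$.

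Next I would record the algebra of SED matrices on the sequential graph. If $X$ is $(C_X,\rho_X)$-SED and $Y$ is $(C_Y,\rho_Y)$-SED then, with $\rho := \max(\rho_X,\rho_Y)$, one has $|(XY)[i,j]| \le C_X C_Y \sum_k \rho^{\dist(i,k)+\dist(k,j)} = C_X C_Y \rho^{\dist(i,j)}\bigl(\dist(i,j)+1+\tfrac{2\rho^2}{1-\rho^2}\bigr)$, because on a path exactly $\dist(i,j)+1$ indices lie between $i$ and $j$ and the rest contribute an off-path geometric tail; absorbing the polynomial prefactor into any $\bar\rho\in(\rho,1)$ shows $XY$ is $(\bar C C_X C_Y,\bar\rho)$-SED with $\bar C$ depending only on $\rho_X,\rho_Y,\bar\rho$, not on $s$. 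Since $A,B,Q,R$ respect $\mathcal{G}$ and are therefore banded (hence SED with arbitrary rate and $s$-uniform constant) and $K$ is $(C_K,\rho_K)$-SED, the matrices $A_K = A - BK$ and $W = Q + K^\top R K$ are $(C_\star,\rho_\star)$-SED with $s$-uniform constants.

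The technical heart is to control the summands $(A_K^t)^\top W A_K^t$ uniformly in both $t$ and $\dist(i,j)$; iterating the product rule above is useless since the rate then drifts to $1$. Instead I would combine two bounds on the powers. On one hand $|(A_K^t)[i,j]| \le \lVert A_K^t\rVert_2 \le M\lambda^t$. On the other, expanding $A_K^t$ over walks and using $|A_K[i,j]|\le C_\star\rho_\star^{\dist(i,j)}$ gives $|(A_K^t)[i,j]| \le C_\star^t (g^{\ast t})(i-j)$ with $g(d)=\rho_\star^{|d|}$, whose symbol $\hat g(\theta) = (1-\rho_\star^2)/(1-2\rho_\star\cos\theta+\rho_\star^2)$ is analytic on $|\operatorname{Im}\theta|<\log(1/\rho_\star)$; a contour shift (equivalently, the decay bounds for analytic functions of SED matrices in \cite{benzi2016localization}) yields $(g^{\ast t})(m)\le Q^t(\rho')^{|m|}$ for any fixed $\rho'\in(\rho_\star,1)$ and an $s$-independent $Q$. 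Taking the geometric mean of the two estimates with a small exponent $\theta\in(0,1)$ chosen so that $\mu := (C_\star Q)^\theta\lambda^{1-\theta}<1$ produces the key bound $|(A_K^t)[i,j]|\le M\mu^{t}\rho_1^{\dist(i,j)}$ with $\mu,\rho_1\in(0,1)$ and all constants independent of $s$. I expect this interpolation — reconciling spectral decay in $t$ with spatial decay in $\dist$ without losing either — to be the main obstacle, and the one place where a genuinely $s$-uniform estimate on matrix powers is needed.

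Given this, the conclusion is routine: applying the product estimate of the second paragraph to $(A_K^t)^\top W A_K^t$ (a triple product of $s$-uniform SED matrices) bounds each summand by $M^2\mu^{2t}$ times an $s$-independent constant times $\rho_2^{\dist(i,j)}$ for a fixed $\rho_2\in(0,1)$ — the degree-two polynomial-in-$\dist(i,j)$ prefactor from the two summations being absorbed into a marginally larger rate — and summing the geometric series $\sum_t\mu^{2t}$ gives $|P[i,j]| \le C_P\rho_P^{\dist(i,j)}$ with $C_P,\rho_P$ depending only on $M,\lambda,\lVert W\rVert_2,C_\star,\rho_\star$. This proves the proposition; via Lemma \ref{lem.QuadraticApprox} and the definition \eqref{def.Pl} it shows that Assumption \ref{ass.Decay} holds for \eqref{eq.LQR_discrete} on a sequential graph with $\gamma(l)$ of order $s\,\rho_P^{l}$. (Alternatively one could invoke directly the exponential off-diagonal decay results for solutions of Lyapunov equations, \cite{haber2016} and \cite{benzi2016localization}, applied to the Stein equation $P - A_K^\top P A_K = W$ once $A_K$ and $W$ are known to be SED; the self-contained route above makes the $s$-independence of $C_P,\rho_P$ transparent.)
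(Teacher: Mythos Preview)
Your proposal follows the same skeleton as the paper's proof: write $P=\sum_{k\ge 0}(A_{cl}^k)^\top D\,A_{cl}^k$ with $A_{cl}=A-BK$ and $D=Q+K^\top RK$, establish SED of $A_{cl}$ and $D$ from the SED of $K$, produce a combined time--space decay bound for the entries of $A_{cl}^k$, and then sum the series while controlling the constants uniformly in $s$.

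The execution differs in two places. For the combined bound on $A_{cl}^k$, the paper simply states $|A_{cl}^k[i,j]|\le C\,C_{cl}\,\rho_K^{|i-j|}\sigma_{cl}^{\,k-1}$ by ``combining'' the Gelfand-type estimate $|A_{cl}^k[i,j]|\le C\sigma_{cl}^k$ with the one-step SED bound; your interpolation via the geometric mean of the spectral-norm bound and the convolution/Fourier bound on $g^{\ast k}$ is a more careful justification of exactly this step (you correctly flag it as the technical heart), and it also makes the $s$-independence of the stability margin explicit rather than relying on the spectral-radius limit from \cite{horn2012matrix}. For the final bookkeeping, the paper introduces auxiliary exponents $\alpha,\beta\in(0,1]$, writes out the double sums $C_1(\alpha,s,i,j)$ and $C_2(\alpha,\beta,s,i,j)$ explicitly, reduces to the extremal pair $(i,j)=(s,1)$ by a monotonicity argument, and then shows boundedness in $s$ for $1>\alpha>\beta$; your abstract SED product rule (polynomial prefactor absorbed into a slightly larger rate) achieves the same uniform control more cleanly but with less explicit constants. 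Either route yields the proposition; your version buys robustness and transparency about $s$-independence, the paper's buys concrete expressions for $C_P,\rho_P$ in terms of $\rho_K$ and $\sigma_{cl}$.
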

\begin{proof} Since $V(x) = x^T P x$, by inserting the optimal control $u(k) = K x(k)$ and the corresponding trajectory $x(k) = A_{cl}^k x(0)$ into the cost functional, we get $$
P = \sum_{k=0}^{\infty} (A_{cl}^k)^\top D A_{cl}^k, 
$$
where $A_{cl} = A-BK$ and $D=Q + K^\top R K$. According to Theorem A.7 in \cite{shin2022lqr}, the following estimate holds
\begin{equation}
|A^k_{cl}[i,j] |\le \lVert A^k_{cl} \rVert_2 \le \Upsilon \rho^k_{cl}, 
\label{cl1}
\end{equation}
where $\Upsilon \ge 1$ and $\rho_{cl} \in (0,1)$ are independent of $s$.
Since $ \dist(i,j) = |i-j|$ for all $i,j \in [s]$ it holds that $A[i,j]$, $B[i,j]$, $Q[i,j]$, and $R[i,j]$ are zero if $|i-j| > 1$. By exploiting this property, one can calculate that 
\begin{equation*}
|A_{cl}[i,j]| \le (\Vert A \Vert_{2}  + 3 C_K \Vert B \Vert_2 ) \rho^{-1}_{K} \rho_K^{|i-j|} =: C_{cl} \rho_K^{|i-j|}.
\end{equation*}
Together with the estimate \eqref{cl1} we obtain
\begin{equation} \label{cl3}
|A^k_{cl}[i,j]| \le \Upsilon C_{cl} \rho_K^{|i-j|} \rho^{k-1}_{cl} , \quad k \ge 1.
\end{equation}
Moreover, by straightforward computations we obtain for $\alpha \in (0,1]$ that $D$ is a $(C_D, \rho_K^\alpha)$-SED matrix, where 
$$
C_D =\Vert Q \Vert_{2} \rho^{-\alpha}_{K}+  \Vert R \Vert_{2} C_1(\alpha,s,i,j),
$$
$$
C_1(\alpha,s,i,j)=\sum_{r=1}^s \sum_{p=r-1}^{r+1} \rho_K^{|r-i|+|p-j|-\alpha|i-j|}. 
$$
Applying \eqref{cl3} and the SED-property of $D$ yields 
\begin{align*}
    & |P[i,j]| \le \sum_{k=0}^{\infty} \sum_{r,p = 1}^s  |A^k_{cl}[r,i] D[r,p] A^k_{cl}[p,j] | \le C_D \rho_K^{\alpha|i-j|} \\ & +  \Upsilon^2 C^2_{cl} C_D \sum_{k=1}^{\infty} \rho^{2(k-1)}_{cl} \sum_{r,p = 1}^s \rho_K^{|r-i|+\alpha|r-p|+|p-j|}. 
\end{align*}
Since $\rho_{cl} <1$, we can compute the infinite time series 
$$
\sum_{k=1}^{\infty} \rho^{2(k-1)}_{cl} = \frac{1}{1-\rho^2_{cl}}.  
$$
Finally, we obtain for $\beta \in (0,1]$ that $P$ is $(C_P, \rho_K^\beta)$-SED with
$$
C_P =  \frac{\Upsilon^2 C^2_{cl} C_D}{1-\rho^2_{cl}}  C_2(\alpha,\beta,s,i,j) + C_D,
$$
\begin{equation}
 C_2(\alpha,\beta,s,i,j) = \sum_{r,p = 1}^s \rho_K^{|r-i|+\alpha|r-p|+|p-j|-\beta|i-j|}. 
 \label{C2}
\end{equation}
It is left to show that $C_1$ and $C_2$ are bounded independently of the number of subsystems $s$. Since $P$ is symmetric, we may assume $i \geq j$. Write $i = j + \kappa s$ for $\kappa \in (0,1)$. Now consider the mappings 
\begin{align*}
    & \kappa \mapsto C_1(1,s,j + \kappa s, j), \\
    & \kappa \mapsto C_2(1,1,s, j + \kappa s, j). 
\end{align*}
The derivatives of these two mappings are strictly negative for $\kappa \in (0,1)$, leading to 
\begin{align*}
& C_1(1,s,i,j) < C_1(1,s,s,1), \quad (i,j) \neq (s,1), \\
& C_2(\alpha,1,s,i,j) <  C_2(\alpha,1,s,s,1), \quad (i,j) \neq (s,1).
\end{align*}
By continuity of $C_1, C_2$ there exist $\delta \in \R_{>0}$ such that for $\alpha, \beta \in (1-\delta, 1]$ it holds that
\begin{align*}
& C_1(\alpha,s,i,j) < C_1(\alpha,s,s,1),\quad (i,j) \neq (s,1),\\
& C_2(\alpha,\beta,s,i,j) <  C_2(\alpha,\beta,s,s,1), \quad (i,j) \neq (s,1).
\end{align*}
Hence, it is sufficient to prove a bound for $C_1$ and $C_2$ for $i = s$ and $j=1$ in the prescribed neighborhood of $\alpha$ and $\beta$. It holds that  
\begin{align*}
  C_1(\alpha,s,s,1) & = \rho_K^{(1-\alpha)s+\alpha-1} \sum_{r=1}^s  \rho_K^{-r}\sum_{p = r-1}^{r+1} \rho_K^{p} \\ 
  & \le 3 s \rho_K^{(1-\alpha)s+\alpha-2}
\end{align*}
is bounded for all $s \in \N$ if $\alpha<1$. Furthermore, we have  
\begin{align*}
     & C_2(\alpha,\beta,s,s,1) =\rho_K^{(1-\beta)s +\beta-1} \sum_{r = 1}^s  \rho_K^{-r}\sum_{p = 1}^s \rho_K^{\alpha|r-p|+p} \\
      & = \rho_K^{(1-\beta)s +\beta-1} \sum_{r = 1}^s  \rho_K^{-r} (\sum_{p=r}^s \rho_K^{p(1+\alpha)-\alpha r} + \sum_{p=1}^{r-1} \rho_K^{p(1-\alpha)+\alpha r}) \\
      & \le \rho_K^{(1-\beta)s +\beta-1}  \sum_{r = 1}^s \bigg( \rho_K^{-r(1+\alpha)} \frac{\rho_K^{r(1+\alpha)}}{1-\rho_K^{1+\alpha}} \\ 
       & \quad + \rho_K^{-r(1-\alpha)} \frac{\rho_K^{(1-\alpha)} - \rho_K^{(1-\alpha)r}}{1-\rho_K^{1-\alpha}} \bigg) \\
       & \le \rho_K^{(1-\beta)s +\beta-1} s \bigg(\frac{1}{1-\rho_K^{1+\alpha}}+ \frac{\rho_K^{(1-\alpha)(1-s)} - 1}{1-\rho_K^{1-\alpha}}\bigg) \\ 
       & \leq \rho_K^{s (\alpha - \beta)} \eta(s), 
\end{align*}
where $\eta$ is linear in $s$. Thus, $C_2$ is bounded uniformly for $s \in \mathbb{N}$ if $1>\alpha>\beta$. This concludes the proof.
\end{proof}
Note that due to the assumption $\dist(i,j) = |i-j|$ in Proposition \ref{prop.DecayP}, the matrices $A$, $B$, $Q$, and $R$ are block-tridiagonal. The proof can be extended for general $r$-banded matrices. This then corresponds to the assumption that $\dist(i,j) > 1$ for $|i-j| > r$, i.e., a sequential graph where every node $v_j$ might be connected with up to $2r$ nodes. In the following section, we present numerical test cases showing this decaying sensitivity property. 

\section{Numerical Examples} \label{sec.NumExamples}

\subsection{Random Linear Quadratic Regulator}
In this example we deal with a generic LQR problem with pseudo-random matrix $A$ generated by random uniform samples in $(0,1)$ by using the MATLAB command \texttt{rand}. We fix $B = Q = R = I_s$ and $s=100$. Starting from a pseudo-random matrix $A \in \mathbb{R}^{s \times s}$ taking values in the interval $[0,1]$, we define a family of $r$-banded matrices $\{A_r\}_{r=0}^n$ by
\begin{equation}
(A_r)[i,j] = \begin{cases}
  A[i,j], & |i-j| \le r,\\
  0, & |i-j| > r.
\end{cases}
\label{banded}
\end{equation}
Fixing the parameter $r \in \{1,\ldots,n\}$, we denote by $P_r$ the solution of the corresponding DARE. The Riccati Equation is solved by the MATLAB implicit solver \texttt{idare}. In Figure \ref{Fig_lqr1} we display a logarithmic plot of the absolute value of the first column of the matrix $P_r$  considering different bandwidths for the matrices $A_r$. In general, we notice a decay for all the choices of the parameter $r$ and the lower is the band-with, the steeper is the descent. 


\begin{figure}[h]
\centering
	\includegraphics[scale=0.44]{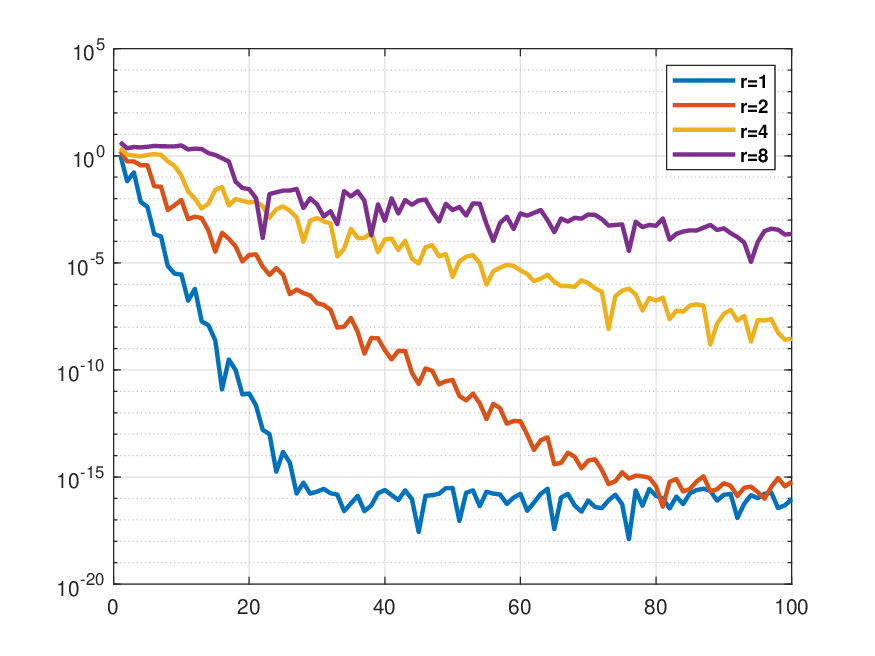}		
  \vspace{-0.6cm}
\caption{Decay in logarithmic scale of the first column of the absolute value of solution of the Riccati equation varying the parameter $k$.}
\label{Fig_lqr1}
\end{figure}
\subsection{Nonlinear example: Allen-Cahn equation}
In the second example we consider the following Allen-Cahn PDE with polynomial nonlinearity and homogeneous Neumann boundary conditions:
\begin{equation}
\left\{ \begin{array}{l}
\hspace{-3 pt} \partial_t y(t,x) = \sigma \partial_{xx} y(t,x) + y(t,x) (1-y(t,x)^2) + u(t,x),  \\
 y(0,x)=y_0(x),
\end{array} \right.
\label{AC}
\end{equation}
with $x \in [0,1]$ and $t \in (0,+\infty)$. Our aim is to steer the solution to the unstable equilibrium $\tilde{y}(x) =0$ minimizing the following cost functional
$$
J(u,y_0) = \int_0^{\infty}  \int_0^1 (|y(t,x)|^2 +   |u(t,x)|^2) dx \, dt \,.
$$
Discretizing the PDE \eqref{AC} by finite difference using $s=100$ grid points, we compute the feedback control by means of the State Dependent Riccati Equation (SDRE).  This approach is an extension of the LQR which takes into account the nonlinear terms present in the cost functional and in the dynamical system. More precisely, writing the nonlinear dynamics in semilinear form
$$
\dot{y}(t) = A(y(t))y(t) +B u(t),
$$
and given the discrete cost functional
$$
J = \gamma \int_0^{\infty} y^\top(t) y(t) + u^\top(t) u(t) dt,
$$
with $\gamma = 1/s$, the feedback control will be given by
\begin{equation}
u(y) = - \gamma^{-1} B^\top P(y),
\label{feedback_sdre}
\end{equation}
where $P(y)$ solves the SDRE

\begin{equation}
A^{\top}(y) P(y) + P(y) A(y) - P(y) B \gamma^{-1} B(y)^{\top} P(y)+\gamma I=0.
\label{SDRE}
\end{equation}

The SDRE feedback generates a stationary Hamiltonian with respect to $u$ and, as the state is driven to zero, necessary optimality conditions are asymptotically satisfied at a quadratic rate. We refer to \cite{ccimen2008state} for an extensive discussion of the topic.
Given the solution $P(y_0)$ of the SDRE \eqref{SDRE} computed for the initial condition $y_0= [\sin(\pi x_i)]_{i=1}^{n}$, in Fig. \ref{Fig_sdre} we show the decay of the absolute values of the first column of SDRE solution varying the viscosity parameter $\sigma \in \{10^{-k},\; k=1,\ldots,4\}$. We note that a lower value of the viscosity corresponds to a steeper decay in the sensitivity. This is a reasonable behaviour since increasing the viscosity, the transmission speed is increasing as well and particles will influence each other faster. Compared to our first numerical test, we observe a different decay behaviour: in the first example it was related to the bandwidth of the matrix $A$, while in this test it is connected to the propagation speed of the diffusion. 
\begin{figure}[h]
\centering
	\includegraphics[scale=0.44]{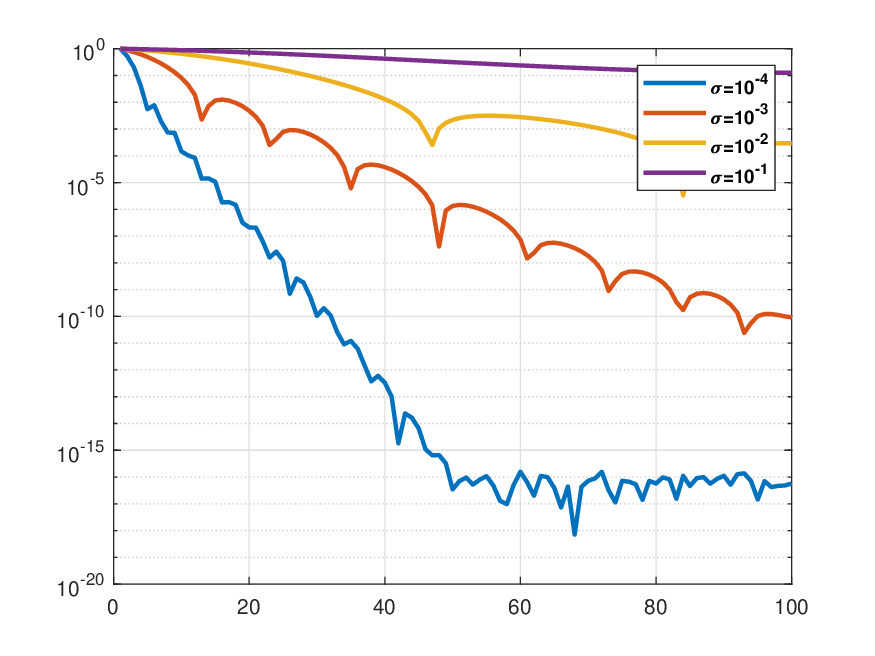}		
 \vspace{-0.54cm}
\caption{Decay  in logarithmic scale of the first column of $|P(y_0)|$ varying the parameter $r$.}
\label{Fig_sdre}
\end{figure}
In order to investigate numerically the error of a separable approximation, we consider a family of $r$-banded matrices $\{P_r(y)\}_r$ for every $y \in \mathbb{R}^n$ as considered in \eqref{banded} for the matrix $A$. In Table \ref{cost_sdre} we consider the error in the computation of the total cost obtained using the banded matrix $\{P_r(y)\}_r$ instead of $P(y)$ for the computation of the feedback map \eqref{feedback_sdre} varying the bandwidth $r$. Fixing the viscosity $\sigma=10^{-4}$, the fast decay displayed in Fig. \ref{Fig_sdre} is reflected in the fast convergence for the total cost. Indeed, it is enough to consider $r=n/10$ to obtain an error of order $10^{-4}$. On the other hand, for the higher viscosity $\sigma=10^{-3}$ the initial error and the decay are worse, leading to the necessity to consider larger bandwidths, i.e., to increase $r$ and consider larger neighborhoods $\mathcal{B}_r(j)$ in the corresponding graph.
\begin{table}[h]
\vspace{0.0cm}
\centering
\begin{tabular}{c|cc}     
$r$  & $\sigma = 10^{-4}$ & $\sigma = 10^{-3}$  \\ \hline
2  &  1.05e-1 &  99.86\\
5 & 1.20e-3 &  3.16 \\
10 & 4.72e-4 &  9.27e-2 \\
20 &  2.50e-8 & 5.60e-3 \\
 \end{tabular}
 \caption{Error in the computation of the total cost.}
 \vspace{-0.75cm}
 \label{cost_sdre}
\end{table}

\section{Conclusion}
In this paper we presented a separable approximation of optimal value functions under a decaying sensitivity assumption. Beyond the computation of curse-of-dimensionality-free approximations with neural networks, as future research we aim at investigating conditions for a decaying sensitivity in a continuous-time setting as well as the influence of the graph-structure on the sensitivity decay.
{\bibliographystyle{abbrv}
\bibliography{main} 
}

\end{document}